\definecolor{labelkey}{gray}{.8}
\definecolor{refkey}{gray}{.8}
\definecolor{darkred}{rgb}{0.9,0.1,0.1}
\definecolor{darkgreen}{rgb}{0,0.5,0}
\newtheorem{theorem}{Theorem}[section]
\newtheorem{lemma}[theorem]{Lemma}
\newtheorem{proposition}[theorem]{Proposition}
\theoremstyle{remark}
\newtheorem{remark}[theorem]{Remark}
\renewenvironment{proof}[1][Proof]{ {\itshape \noindent {#1.}} }{$\Box$
\medskip}
\numberwithin{equation}{section}
\newcommand{\R}{\mathbb{R}}
\newcommand{\Pb}{\mathbb{P}}
\newcommand{\E}{\mathbb{E}}
\newcommand{\F}{\mathcal{F}}
\newcommand{\B}{\mathcal{B}}
\newcommand{\cO}{\mathcal{O }}
\newcommand{\cR}{\mathcal{R}}
\newcommand{\D}{\mathcal{D}}
\newcommand{\eps}{\varepsilon}
\newcommand{\Var}{\mathrm{Var}}
\newcommand{\la}{\langle}
\newcommand{\ra}{\rangle}
\newcommand{\EE}{\mathbf{E}}
\newcommand{\cP}{\mathcal{P}}
\begin{document}

\title[Gaussian fluctuations of polymer overlaps]{Gaussian fluctuations of replica overlap in directed polymers}
\author{Yu Gu, Tomasz Komorowski}

\address[Yu Gu]{Department of Mathematics, University of Maryland, College Park, MD 20742, USA}

\address[Tomasz Komorowski]{Institute of Mathematics, Polish Academy
  of Sciences, ul. \'{S}niadeckich 8, 00-656, Warsaw, Poland.
  Institute of Mathematics, UMCS, pl. Marii Curie-Sklodowskiej 1
20-031 Lublin}

\maketitle

\begin{abstract}
In this short note, we prove a central limit theorem for a type of replica overlap of the Brownian directed polymer in a Gaussian random environment, in the low temperature regime and in all dimensions. The proof relies on a   superconcentration result for the KPZ equation driven by a spatially mollified noise, which is inspired by the recent work of Chatterjee \cite{C1}.
\bigskip



\noindent \textsc{Keywords:} directed polymer, KPZ equation, superconcentration.

\end{abstract}
\maketitle

\section{Introduction}

\subsection{Main result}
Let $\eta(t,x)$ be a spacetime white noise on $\R_+\times \R^d$, 
and $\phi\in C_c^\infty(\R^d)$ be a non-negative, compactly
    supported, smooth function. Define the
generalized Gaussian process $\xi$ by 
\begin{equation}\label{e.defxi}
\xi(t,x)=\int_{\R^d} \phi(x-y)\eta(t,y)dy.
\end{equation}
So $\xi$ is white in time with the spatial covariance function 
\begin{equation}\label{e.defR}
R(x)=\int_{\R^d}\phi(x+y)\phi(y)dy,\quad  x\in\R^d.
\end{equation}
Let $B$ be a standard Brownian motion that is independent of $\eta$, starting from the origin. We assume that $B$ and $\eta$ are defined on a common probability space $(\Omega,\F,\Pb)$, and let $\E$ and $\EE$ denote the expectations with respect to $B$ and $\eta$ respectively.

The Brownian directed polymer in the random environment $\xi$ was
introduced in \cite{TR}. We briefly describe it as follows. For
each realization of the noise $\xi$, fixed $\beta>0$ and $T>0$, define the point-to-line polymer measure $\hat{\Pb}_T$ on $C[0,T]$ as  the Wiener measure tilted by the Radon-Nikodym derivative 
\[
\frac{e^{\beta \int_0^T \xi(s,B_s)ds-\frac12\beta^2R(0)T}}{Z_T},
\]
where $Z_T$ is the partition function 
\begin{equation}\label{e.defZ}
Z_T=\E [e^{\beta \int_0^T \xi(s,B_s)ds-\frac12\beta^2R(0)T}].
\end{equation}
The expectation with respect to the polymer measure $\hat{\Pb}_T$ is denoted by $\hat{\E}_{T}$. In other words, for any bounded   $F: C[0,T]\to\R$, we have 
\[
\hat{\E}_{T}[F(B)]=Z_T^{-1}\E[e^{\beta \int_0^T \xi(s,B_s)ds-\frac12\beta^2R(0)T}F(B)].
\]
For any $t\geq0$, define the overlap of the polymer endpoint at time $t$ as 
\[
\begin{aligned}
&\hat{\E}_t^{\otimes 2} [R(B_1(t)-B_2(t))]\\
&=Z_t^{-2} \E^{\otimes 2}[e^{\beta \int_0^t[\xi(s,B_1(s))+\xi(s,B_2(s))]ds-\beta^2R(0)t} R(B_1(t)-B_2(t))],
\end{aligned}
\]
where $B_1,B_2$ are two independent copies of Brownian motions, and $\hat{\E}_t^{\otimes 2}$ is the expectation with respect to $\hat{\Pb}_t^{\otimes 2}$. 

For each $T\geq0$, we define the replica overlap up to time $T$ as
\begin{equation}\label{e.defO}
\mathcal{O}_T=\int_0^T \hat{\E}_t^{\otimes 2} [R(B_1(t)-B_2(t))]dt,
\end{equation}
and we will consider the so-called low temperature regime. It is well-known that as $T\to\infty$,
\begin{equation}\label{e.confree}
\frac{1}{T}\log Z_T\to-\gamma(\beta)
\end{equation}
almost surely, where $\gamma(\beta)\geq 0$ is some constant, see
\cite[Proposition 2.6]{TR}. The {\em low temperature 
regime} is defined  as the set of those $\beta$ such that
$\gamma(\beta)>0$, see \cite[Definition 2.1, p. 27]{comets}. Note that the partition function defined in \eqref{e.defZ} is normalized so that $\EE Z_T\equiv1$, therefore the $\gamma(\beta)$  obtained above actually equals to the difference between the quenched and annealed free energy.

It is a popular topic in the study of directed polymers to
  consider different notions of strong and weak disorder regimes. For
  our model and under the assumption of $0\leq R(\cdot)\in L^1(\R^d)$,
  we expect that  the low temperature  regime is $\{\beta>0\}$ in
  $d=1,2$ and $\{\beta>\beta_c\}$ in $d\geq3$ for some critical
  $\beta_c>0$. Actually, it follows from \cite[Theorem 1.3]{lacoin}
  that the low temperature regime in $d=1$ is  $\{\beta>0\}$. In
  $d=2$, it was shown in \cite{lacoin1} for a discrete model that the
  low temperature regime  in $d=2$ is also $\{\beta>0\}$, so it is
  natural to expect that the same holds in our continuous setting, see a similar discussion in  \cite[Remark 1.5]{lacoin}. Since this is not the focus of this note, we do not attempt to follow the proof of \cite{lacoin1} in the discrete setting to establish this   for our model.  The phase transition in $d\geq3$ is well-known, see e.g. the work of \cite{ofer}.

By a semimartingale decomposition, see \eqref{ZM} and \eqref{MO} below, we have
\[
\EE \cO_T=-2\beta^{-2}\EE \log Z_T \approx 2\beta^{-2} \gamma(\beta)T, \quad\quad \mbox{ for } T\gg1.
\]
In other words, the mean of the replica overlap grows linearly in $T$, in the low temperature regime. Now we can state the main result, which is on the random fluctuations of $\cO_T$ around $\EE \cO_T$:

%
%

\begin{theorem}\label{t.mainth}
In the low temperature regime, we have 
\[
\frac{1}{\sqrt{T}}\big( \cO_T-\EE\cO_T\big)\Rightarrow N(0,8\gamma(\beta)\beta^{-4})
\]
in distribution, as $T\to\infty$. 
\end{theorem}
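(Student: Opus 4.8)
The plan is to reduce the statement to a martingale central limit theorem, using the semimartingale decomposition of $\log Z_T$ recorded in \eqref{ZM}--\eqref{MO} together with a superconcentration bound for $\log Z_T$ that eliminates a term of lower order.

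First I would record the decomposition. Since $Z_t$ is a positive $\F_t^\eta$-martingale satisfying $dZ_t=\beta Z_t\,dM_t$ with
\[
M_t=\int_0^t\!\!\int_{\R^d}\hat{\E}_s\big[\phi(B_s-y)\big]\,\eta(ds,dy),
\]
Itô's formula gives $\log Z_T=\beta M_T-\tfrac12\beta^2\la M\ra_T$, and a short computation using \eqref{e.defR} identifies the bracket as $\la M\ra_t=\int_0^t\hat{\E}_s^{\otimes2}\big[R(B_1(s)-B_2(s))\big]\,ds=\cO_t$. Since $0\le R\le R(0)$, one has the deterministic bound $\cO_t\le R(0)\,t$, so $M$ is a true $L^2$ martingale with $\EE M_T=0$ and $\EE M_T^2=\EE\cO_T\le R(0)T$; in particular $\EE\cO_T=-2\beta^{-2}\EE\log Z_T$, and subtracting this from the identity for $\cO_T$ gives the relation I would use throughout,
\[
\cO_T-\EE\cO_T=\frac2\beta\,M_T-\frac{2}{\beta^2}\big(\log Z_T-\EE\log Z_T\big).
\]

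Next I would invoke the main analytic input, the superconcentration estimate $\Var(\log Z_T)=o(T)$ as $T\to\infty$, proved in the body of the paper by adapting Chatterjee's interpolation scheme \cite{C1} to the mollified stochastic heat / KPZ equation: one expresses $\Var(\log Z_T)$ as a double time-integral of an overlap-type functional of two weakly decorrelated copies of the environment and shows it is sublinear, using semigroup and hypercontractivity estimates together with, in the low-temperature regime, the localization of the polymer. Granting this, $T^{-1/2}(\log Z_T-\EE\log Z_T)\to0$ in $L^2$, hence in probability. Feeding this bound and the elementary estimate $\EE(M_T/T)^2\le R(0)/T\to0$ into the relation above shows $T^{-1}(\cO_T-\EE\cO_T)\to0$ in probability, while $T^{-1}\EE\cO_T=-2\beta^{-2}T^{-1}\EE\log Z_T\to2\beta^{-2}\gamma(\beta)$ by \eqref{e.confree}; hence
\[
\frac1T\,\la M\ra_T=\frac1T\,\cO_T\longrightarrow 2\beta^{-2}\gamma(\beta)\qquad\text{in probability.}
\]

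Finally, $M$ is a continuous martingale vanishing at $0$ whose normalized bracket converges in probability to the constant $\sigma^2:=2\beta^{-2}\gamma(\beta)$, which is strictly positive precisely because we are in the low-temperature regime, so the limit will be non-degenerate. The continuous-time martingale central limit theorem — equivalently, the Dambis--Dubins--Schwarz representation $M_t=W_{\la M\ra_t}$ combined with $\la M\ra_T/T\to\sigma^2$ — then gives $T^{-1/2}M_T\Rightarrow N(0,\sigma^2)$; plugging this and $T^{-1/2}(\log Z_T-\EE\log Z_T)\to0$ into the relation of the first step and applying Slutsky's theorem yields
\[
\frac{1}{\sqrt T}\big(\cO_T-\EE\cO_T\big)\Rightarrow N\Big(0,\tfrac{4}{\beta^2}\sigma^2\Big)=N\big(0,8\gamma(\beta)\beta^{-4}\big),
\]
which is the claim. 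The only step with real content is the superconcentration bound $\Var(\log Z_T)=o(T)$: the decomposition and the martingale CLT are soft, whereas ruling out diffusive $\sqrt T$-fluctuations of the free energy — which is exactly what lets the overlap martingale term $\tfrac2\beta M_T/\sqrt T$ survive alone in the limit — is where essentially all the difficulty lies.
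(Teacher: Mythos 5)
Your proposal is correct and follows essentially the same route as the paper: the It\^o decomposition $\log Z_T=M_T-\tfrac12\langle M\rangle_T$ with $\langle M\rangle_T$ proportional to $\cO_T$, the superconcentration bound to kill $T^{-1/2}(\log Z_T-\EE\log Z_T)$, and the martingale CLT for the remaining term with limiting variance identified through $\gamma(\beta)$. The only differences are cosmetic (your normalization of $M$, the crude bound $\EE M_T^2\le R(0)T$ in place of the paper's appeal to \cite{TR}, a one-dimensional rather than functional martingale CLT), except that your step $T^{-1}\EE\log Z_T\to-\gamma(\beta)$ needs slightly more than the almost sure convergence \eqref{e.confree} alone — combine it with the superconcentration bound you already invoke (or an $L^1$ version of the free-energy convergence), which closes the point immediately; note also that the paper's superconcentration proof uses Talagrand's $L^1$--$L^2$ bound with the Benjamini--Kalai--Schramm averaging rather than the hypercontractivity/localization argument you sketch, and holds for all $\beta$.
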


\subsection{Motivation}
\label{s.motivation}

The directed polymer in random environment is a popular subject in
probability and statistical physics, and a prototype model in the
study of  disordered physical systems. Here we will not attempt to review the large body of literature and only refer the readers to the monograph \cite{comets}, the introduction of  \cite{BC1} and the references therein. Our interest in the replica overlap defined in \eqref{e.defO} are twofold. 

(i) Quantities of the form \eqref{e.defO} are closely related to the
localization phenomenon, which has been extensively studied, see \cite{comets1,comets4,comets2,comets3,BC1,BC2,C2,Bates,vargas} and the references therein. In
the low temperature  regime, $\cO_T$ grows linearly with   $T$, which can already be taken as a sign of localization. It shows that, in a time averaged sense, $\hat{\E}_t^{\otimes 2} [R(B_1(t)-B_2(t))]$ is strictly positive, which implies that, since $R(\cdot)$ is fast-decaying, the endpoints of the two independent samples   from $\hat{\Pb}_t$ must be ``close to each other''. We refer to \cite[Chapter 5, page 76-77]{comets} for an interpretation of $\cO_T$ as a ``replica overlap''. 
 Another form of replica overlap may be defined as 
\begin{equation}\label{e.defO1}
\mathscr{O}_T=\int_0^T \hat{\E}_T^{\otimes 2} [R(B_1(t)-B_2(t))]dt,
\end{equation}
where the average is taken with respect to a fixed Gibbs measure and  
is arguably more natural. It is well-known that $\cO_T$ and
$\mathscr{O}_T$ appear in different contexts, one through It\^o
calculus and the other through Malliavin calculus, see \cite{comets1}
for a nice discussion. Under certain assumptions, one can also show
that  $\EE\mathscr{O}_T$ grows linearly with $T$, see
e.g. \cite[Proposition 2.3]{comets1} and \cite[Equation (1.7)]{Bates}
and the references cited there. See also \cite{BC2} for some relevant
result along the line of concentration of $\mathscr{O}_T/T$.  To us,
it seems very natural  to consider the next order fluctuations, beyond
the linear growth. The present note studies the fluctuations of
$\cO_T$, which turns out to be Gaussian. We are curious whether
the same holds for $\mathscr{O}_T$. We present  some further discussions on the implications of our result in Section~\ref{s.discussion}.

(ii) The free energy of the directed polymer is given by $\log Z_T$,
the fluctuations of which are expected to be sub-diffusive in all
dimensions. It is   related to the solution to the KPZ equation,
driven by $\xi$ and started from a constant initial data, see
\eqref{e.lawequal} below. In $d=1$, the fluctuation exponent for $\log Z_T$ is expected
to be $1/3$, which was proved
for several models in the 1+1 KPZ universality class, see e.g. \cite{timo,acq,bqs,bcf} and the reviews \cite{corwin2012kardar,qs}.
In dimensions higher than one, the exponent is unknown, while the
 variance is again expected to grow sublinearly, which is the so-called
 superconcentration in \cite{C}. We will show in
 Theorem~\ref{t.supercon191}  that the variance of $\log Z_T$,
   hence also the solution to
 the KPZ equation, behaves sublinearly, as $T\gg1$. Previous results on the same type of superconcentration can be found in the recent paper \cite{C1} and the references therein. To us, a somewhat natural way of deriving and quantifying the superconcentration phenomenon is to  express $\log Z_T$ using a semimartingale decomposition: since $Z$ itself is a positive martingale, we have $\log Z_T=M_T-\frac12\la M\ra_T$, 
where $M$ is a continuous martingale and $\la
M\ra$ is its quadratic variation. It turns out that the overlap
$\cO_T$ is just $\beta^{-2}\la M\ra_T$, see \eqref{e.defM} below.  A
simple argument invoking \eqref{e.confree} and the martingale central
limit theorem directly shows that $M_T/\sqrt{T}$ is asymptotically
Gaussian, in the low temperature regime. Therefore, the   central
limit theorem for $(\la M\ra_T-\EE \la M\ra_T)/\sqrt{T}$ is actually a
necessary condition for the superconcentration of $\log Z_T-\EE\log
Z_T$, and one would expect that a detailed understanding of the
Gaussianity coming  from $(\la M\ra_T-\EE \la M\ra_T)/\sqrt{T}$ could
help with quantifying the superconcentration phenomenon. This has been our
original motivation to study the fluctuations of $\cO_T=\beta^{-2} \la
M\ra_T$. It turns out that $\la M\ra_T$ can be written as an additive
functional of a Markov process $\{\rho(t,\cdot)\}_{t\geq0}$, which
takes values in the space of probability measures on $\R^d$. For each
$t\geq0$, $\rho(t,\cdot)$ is  the endpoint distribution of the polymer
path under $\hat{\Pb}_t$, an object that has been extensively studied. In \cite{gk}, we considered the
problem on a torus, and showed  that $\{\rho(t,\cdot)\}_{t\geq0}$ has
a unique invariant measure and   converges exponentially fast to
it in an appropriate Fortet-Mourier metric. Then by solving the Poisson equation corresponding to the
generator of the process and performing another martingale decomposition, we showed that $(\la M\ra_T-\EE\la M\ra_T)/\sqrt{T}$ satisfies a central limit theorem. Nevertheless, when it is on a torus, the variance of $\log Z_T$ grows linearly: $\Var \log Z_T \sim T$, so there is no complete cancellation between $M_T$ and $\frac12[\la M\ra_T-\EE \la M\ra_T]$. Some further attempts  have been made in \cite{dgk} to increase the size of the torus with time and to quantify the superconcentration phenomenon in $d=1$, leading to optimal exponents in certain regimes, without covering the case of the whole space though. We are curious if one can study the aforementioned additive functional directly, by establishing a certain mixing property of the process $\{\rho(t,\cdot)\}_{t\geq0}$.  At this point, it is worth mentioning the recent works of \cite{BC1,mukherjee,bakhtin1}, where the probability space is compactified to study the evolution of the process $\{\rho(t,\cdot)\}_{t\geq0}$.

As mentioned previously, the proof of Theorem~\ref{t.mainth} relies on proving the superconcentration of $\log Z_T$. Similar results have been obtained in \cite{AZ,graham,C} for different models. Our approach follows \cite{C1}, and a crucial input is an estimate on the spatial variations of the solution to the KPZ equation, see Proposition~\ref{p.bdburgers} below. This is a version of the ``subroughness'' defined in \cite{C1}, and provides an (sub-optimal) upper bound on the fluctuations of the spatial increments of the solution to the KPZ equation, see Remark~\ref{r.sub}. By the local averaging trick of Benjamini-Kalai-Schramm \cite{BKS}, the superconcentration follows from Talagrand's $L^1-L^2$ bound, see \cite{talagrand} and \cite[Chapter 5]{C}.

The rest of the note is organized as follows. In
Section~\ref{s.proof}, we prove the main result assuming the
superconcentration of $\log Z_T$, which is shown in
Section~\ref{s.supercon}. Some further discussions are carried out in Section~\ref{s.discussion}.

\subsection*{Acknowledgements}
We thank Erik Bates and Sourav Chatterjee for comments on the draft and two anonymous referees for a careful reading of the paper  which helps to improve the presentation. Y.G. was partially supported by the NSF through DMS-2203007/2203014.  T.K. acknowledges the support of NCN grant 2020/37/B/ST1/00426.

\section{Proof of the main result}\label{s.proof}

The partition function $Z_T$ defined in \eqref{e.defZ} is a positive martingale, and the following semi-martingale decomposition of $\log Z_T$ is well-known:
\begin{equation}
\label{ZM}
\log Z_T=M_T-\frac12\la M\ra_T.
\end{equation}
Here
\begin{equation}\label{e.defM}
\begin{aligned}
&M_T=\int_0^T Z_t^{-1}dZ_t=\beta\int_0^T\int_{\R^d} \rho(t,y)\xi(t,y)dydt,\\
&\la M\ra_T=\int_0^T Z_t^{-2} d\la Z\ra_t=\beta^2\int_0^T\int_{\R^{2d}} \rho(t,y)\rho(t,y')R(y-y')dydy'dt.
\end{aligned}
\end{equation}
The $\rho(t,\cdot)$ here is the endpoint density of the directed polymer under $\hat{\Pb}_t$, i.e., 
\begin{equation}\label{e.defrho}
\rho(t,x)=Z_t^{-1} \E[e^{\beta \int_0^t \xi(s,B_s)ds-\frac12\beta^2 R(0)t}\delta(B_t-x)].
\end{equation}
From \eqref{e.defO} and \eqref{e.defM}, we  know that
\begin{equation}
\label{MO}
\la M\ra_T=\beta^2\cO_T.
\end{equation}
The proof of Theorem~\ref{t.mainth} relies on the following lemmas.

\begin{lemma}\label{l.conma192}
$\frac{1}{T}M_T\to0$ in the $L^2$-sense, as $T\to\infty$.
\end{lemma}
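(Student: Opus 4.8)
The plan is to bound the second moment $\EE[M_T^2]$ directly, using that $M$ is a martingale whose bracket is controlled deterministically in terms of $\mathcal O_T$, together with a crude pointwise bound on the covariance kernel $R$.

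First I would record the two structural facts already at hand: $M$ is a continuous local martingale for the noise filtration, and by \eqref{MO} its quadratic variation is $\langle M\rangle_T=\beta^2\mathcal O_T$. Next I would establish the a priori bound $0\le\mathcal O_T\le R(0)\,T$. The lower bound is immediate since $\langle M\rangle_T\ge0$; for the upper bound, Cauchy--Schwarz applied to \eqref{e.defR} gives, for every $x\in\R^d$,
\[
R(x)\le\Big(\int_{\R^d}\phi(x+y)^2\,dy\Big)^{1/2}\Big(\int_{\R^d}\phi(y)^2\,dy\Big)^{1/2}=\int_{\R^d}\phi(y)^2\,dy=R(0),
\]
so that $\hat{\E}_t^{\otimes 2}[R(B_1(t)-B_2(t))]\le R(0)$ for each $t\ge0$, and the bound on $\mathcal O_T$ follows from \eqref{e.defO}.

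With this in hand the conclusion is essentially one line. Since $\EE\langle M\rangle_T=\beta^2\,\EE\mathcal O_T\le\beta^2 R(0)\,T<\infty$, the local martingale $M$ is in fact a genuine square-integrable martingale, and hence $\EE[M_T^2]=\EE\langle M\rangle_T$. Therefore
\[
\EE\Big[\Big(\tfrac1T M_T\Big)^2\Big]=\frac{1}{T^2}\,\EE\langle M\rangle_T=\frac{\beta^2}{T^2}\,\EE\mathcal O_T\le\frac{\beta^2 R(0)}{T}\xrightarrow[T\to\infty]{}0,
\]
which is the assertion of the lemma.

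I do not anticipate any real obstacle here. The only step that deserves a word of justification is the upgrade from ``continuous local martingale'' to ``genuine $L^2$-martingale'' (which is what legitimizes $\EE[M_T^2]=\EE\langle M\rangle_T$), and this is precisely what the uniform bound $\langle M\rangle_T\le\beta^2 R(0)\,T$ provides. One could instead reach the same estimate via the identity $\EE\mathcal O_T=-2\beta^{-2}\EE\log Z_T$ together with $\EE|\log Z_T|=O(T)$, but invoking the self-contained inequality $R\le R(0)$ keeps the argument shortest.
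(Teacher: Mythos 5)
Your proof is correct. The shared backbone is the isometry $\EE M_T^2=\EE\la M\ra_T$, but you and the paper bound the bracket differently: you use the deterministic estimate $\la M\ra_T=\beta^2\cO_T\le \beta^2R(0)T$, which follows from $R(x)\le R(0)$ (Cauchy--Schwarz applied to \eqref{e.defR}) together with the fact that $\hat{\Pb}_t^{\otimes 2}$ is a probability measure, whereas the paper takes expectations in \eqref{ZM} to get $\EE\la M\ra_T=-2\EE\log Z_T$ and then invokes \cite[Proposition 2.5]{TR} for the growth of $\EE\log Z_T$. Your version is self-contained (no external citation) and you are slightly more careful on the upgrade from continuous local martingale to genuine $L^2$-martingale, which the paper leaves implicit; moreover, the pointwise bound $R\le R(0)$ you exploit is exactly the one the paper uses later for tightness in Lemma~\ref{l.wkma191}, so nothing new is assumed. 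What the paper's route buys is the sharper asymptotic information $\EE M_T^2=-2\EE\log Z_T\approx 2\gamma(\beta)T$, which is the quantity that actually matters for the limiting variance in Lemma~\ref{l.wkma191}; for the present lemma, where only $\EE M_T^2=o(T^2)$ is needed, either bound suffices.
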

\begin{proof}
We have $\EE M_T^2=\EE \la M\ra_T=-2\EE \log Z_T$, so by \cite[Proposition 2.5]{TR}, we have 
\[
\frac{1}{T^2} \EE M_T^2=-\frac{2}{T^2} \EE \log Z_T\to0
\]
which completes the proof.
\end{proof}

\begin{lemma}\label{l.wkma191}
In the low temperature regime, as $\eps\to0$, we have 
\[
(\eps M_{T/\eps^2})_{T\geq0} \Rightarrow (\sigma W_T)_{T\geq0}
\]
in $C[0,\infty)$ with $\sigma=\sqrt{2\gamma(\beta)}>0$, where $W$ is a standard Brownian motion.
\end{lemma}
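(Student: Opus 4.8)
The plan is to recognize Lemma~\ref{l.wkma191} as an instance of the functional central limit theorem for continuous martingales. Since $Z$ is a positive $L^2$-martingale with $\EE\la M\ra_T=-2\EE\log Z_T<\infty$ for every $T$, the process $M$ defined in \eqref{e.defM} is a continuous, square-integrable martingale with $M_0=0$ (with respect to the filtration generated by the noise $\eta$). For $\eps>0$ set $M^{(\eps)}_t:=\eps M_{t/\eps^2}$; this is again a continuous martingale, with respect to the time-changed filtration, satisfying $M^{(\eps)}_0=0$ and having quadratic variation $\la M^{(\eps)}\ra_t=\eps^2\la M\ra_{t/\eps^2}$. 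By the martingale invariance principle (see, e.g., the monographs of Ethier and Kurtz, or Jacod and Shiryaev), to obtain $M^{(\eps)}\Rightarrow\sigma W$ in $C[0,\infty)$ it suffices to verify that, for each fixed $t\ge 0$,
\[
\la M^{(\eps)}\ra_t=\eps^2\la M\ra_{t/\eps^2}\longrightarrow\sigma^2 t\quad\text{in probability as }\eps\to0,
\]
where $\sigma^2=2\gamma(\beta)$; the Lindeberg-type condition in the general statement is vacuous here because $M$, and hence each $M^{(\eps)}$, has continuous paths.

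The convergence of the quadratic variation is where the hypotheses enter. Using the semimartingale decomposition \eqref{ZM} in the form $\la M\ra_T=2M_T-2\log Z_T$ and dividing by $T$, we get
\[
\frac{1}{T}\la M\ra_T=\frac{2M_T}{T}-\frac{2\log Z_T}{T}.
\]
By Lemma~\ref{l.conma192}, $M_T/T\to0$ in $L^2$, hence in probability, and by \eqref{e.confree}, $\tfrac1T\log Z_T\to-\gamma(\beta)$ almost surely, hence in probability; therefore $\tfrac1T\la M\ra_T\to 2\gamma(\beta)$ in probability as $T\to\infty$. Applying this with $T=t/\eps^2\to\infty$ gives $\la M^{(\eps)}\ra_t=t\cdot(t/\eps^2)^{-1}\la M\ra_{t/\eps^2}\to 2\gamma(\beta)\,t$ in probability, which is exactly the required input with $\sigma^2=2\gamma(\beta)$; note $\sigma^2>0$ precisely because we are in the low-temperature regime, so the limit $\sigma W$ is a genuine Brownian motion.

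Finally, I would remark that, since each path $t\mapsto\la M^{(\eps)}\ra_t$ is nondecreasing and the limit $t\mapsto\sigma^2 t$ is continuous and deterministic, the pointwise-in-$t$ convergence in probability above automatically upgrades to convergence in probability uniformly on compact time intervals (a Dini/P\'olya-type argument for monotone functions); this is the form of the hypothesis used in standard statements of the martingale FCLT, and convergence in $C[0,\infty)$ means convergence in $C[0,N]$ for every $N$, so the scaled sequence inherits it.

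As for the main difficulty: there is essentially no technical obstacle beyond what has already been established — the entire content of the lemma is carried by Lemma~\ref{l.conma192} together with the free-energy convergence \eqref{e.confree}, after which the statement is a black-box application of the continuous-martingale invariance principle. The only points requiring care are bookkeeping: confirming that $M$ is a true $L^2$-martingale (so that its quadratic variation is the one appearing in \eqref{e.defM}), phrasing the convergence $\la M\ra_T/T\to2\gamma(\beta)$ in probability along the continuous time parameter, and feeding it into the FCLT in the form it requires.
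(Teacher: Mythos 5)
Your proposal is correct and follows essentially the same route as the paper: the key input $\la M\ra_T/T\to 2\gamma(\beta)$ in probability is derived exactly as in the paper from \eqref{ZM}, Lemma~\ref{l.conma192} and \eqref{e.confree}, and the conclusion is then a black-box application of the continuous-martingale invariance principle. The only cosmetic difference is in the tightness/uniformity step: the paper verifies tightness explicitly via the deterministic bound $\eps^2[\la M\ra_{t/\eps^2}-\la M\ra_{s/\eps^2}]\leq \beta^2R(0)(t-s)$ (citing Jacod--Shiryaev), whereas you invoke the form of the martingale FCLT that needs only pointwise-in-$t$ convergence of the quadratic variation (with the monotonicity upgrade), which is equally valid.
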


\begin{proof}
Since $(\eps M_{T/\eps^2})_{T\geq0}$ is a family of continuous, square
integrable martingales, it suffices to consider the quadratic variation. We write it explicitly:
\[
\eps^2 \la M\ra_{T/\eps^2}=\eps^2\beta^2\int_0^{T/\eps^2} dt\int_{\R^{2d}} \rho(t,y)\rho(t,y')R(y-y')dydy'.
\]
First, by combining \eqref{e.confree} and Lemma~\ref{l.conma192}, we have 
\[
\frac{1}{T}\la M\ra_T=\frac{-2}{T}(\log Z_T-M_T)\to 2\gamma(\beta)
\]
in probability. Thus, we have the convergence of finite dimensional distributions of the process $(\eps^2\la M\ra_{T/\eps^2})_{T\geq0}$ as $\eps\to0$. It remains to show the tightness. For any $t\geq s$, we have 
\[
\eps^2[\la M\ra_{t/\eps^2}-\la M\ra_{s/\eps^2}]=\eps^2\beta^2\int_{s/\eps^2}^{t/\eps^2}d\ell\int_{\R^{2d}}\rho(\ell,y)\rho(\ell,y')R(y-y')dydy'.
\]
Since $R(x)\leq R(0)$, we  have 
\[
\eps^2[\la M\ra_{t/\eps^2}-\la M\ra_{s/\eps^2}] \leq \beta^2R(0) (t-s),
\]
which implies tightness, see e.g. \cite[Theorem VI.4.12, p. 358]{jacod-shiryaev}. The proof is complete.
\end{proof}

The following result plays a crucial role in establishing the Gaussian fluctuations of the replica-overlap.
\begin{theorem}\label{t.supercon191}
There exists $C>0$ such that
$$
\Var \log Z_T \leq \frac{CT}{\log T}\quad\mbox{ for }T\geq2.
$$
\end{theorem}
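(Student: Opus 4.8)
The plan is to follow the strategy of Chatterjee \cite{C1}: combine a spatial-averaging argument in the spirit of Benjamini--Kalai--Schramm \cite{BKS} with Talagrand's $L^1$--$L^2$ inequality \cite{talagrand}, using as the essential input a bound on the spatial roughness of the solution to the KPZ equation (Proposition~\ref{p.bdburgers}). Recall that $\log Z_T$ is, up to the Cole--Hopf transform, the value at time $T$ of the solution $\mathscr{h}(T,x)$ of the KPZ equation driven by $\xi$ started from flat initial data; by spatial stationarity of $\xi$, the law of $\log Z_T$ does not depend on the chosen spatial point.

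\medskip
\textbf{Step 1: Set up the Gaussian Malliavin structure.} Since $\xi$ is a (generalized) Gaussian field, $\log Z_T$ is a function of the white noise $\eta$ on $[0,T]\times\R^d$, and one has the Gaussian Poincar\'e-type representation of the variance in terms of the Malliavin derivative $D_{s,y}\log Z_T$. A direct computation (using the martingale $Z$) gives $D_{s,y}\log Z_T=\beta\int_{\R^d}\phi(y-z)\rho(s,z)\,dz$ for $s\le T$, so that $\Var\log Z_T\le \E\int_0^T\!\!\int_{\R^d} (D_{s,y}\log Z_T)^2\,dy\,ds=\beta^2\,\EE\mathcal{O}_T\asymp T$; this recovers only the trivial linear bound and shows why a more refined argument is needed.

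\medskip
\textbf{Step 2: Spatial averaging.} Following \cite{BKS,C}, introduce, for a large length scale $L$ to be optimized, the spatially averaged field $\bar{h}_{T,L}=L^{-d}\int_{[0,L]^d}\mathscr{h}(T,x)\,dx$. By stationarity $\E\bar h_{T,L}=\E\log Z_T$ and, crucially, $\Var\log Z_T=\Var\bar h_{T,L}+(\text{covariance correction})$; the covariance correction is controlled by the decay of spatial correlations of $\mathscr{h}(T,\cdot)$, which in turn is governed by the subroughness estimate of Proposition~\ref{p.bdburgers}: $\E|\mathscr{h}(T,x)-\mathscr{h}(T,0)|^2\lesssim |x|\log(\text{something})$ or the analogous statement there. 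More precisely, one writes $\Var\log Z_T \le \Var\bar h_{T,L} + C\,\sup_{|x|\le \sqrt d L}\E(\mathscr h(T,x)-\mathscr h(T,0))^2$, and the subroughness bound makes the second term $o(T/\log T)$ for suitable $L$.

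\medskip
\textbf{Step 3: Talagrand's $L^1$--$L^2$ bound applied to $\bar h_{T,L}$.} Apply Talagrand's inequality to the functional $\bar h_{T,L}$ of the Gaussian noise: $\Var\bar h_{T,L}\lesssim \sum_{\text{blocks }I}\dfrac{\|D_I \bar h_{T,L}\|_{L^2}^2}{1+\log(\|D_I \bar h_{T,L}\|_{L^2}/\|D_I\bar h_{T,L}\|_{L^1})}$, where $D_I$ denotes the Malliavin derivative restricted to a unit spacetime block $I\subset[0,T]\times\R^d$ (so there are $\asymp T\cdot(\text{volume factor})$ relevant blocks). The key gain is that because $\bar h_{T,L}$ is an average over a window of size $L$, the localized derivative $D_I\bar h_{T,L}$ is spread out, so $\|D_I\bar h_{T,L}\|_{L^2}/\|D_I\bar h_{T,L}\|_{L^1}$ is large, of polynomial size in $L$; this forces the logarithm in the denominator to be $\gtrsim \log L$, yielding $\Var\bar h_{T,L}\lesssim \dfrac{\sum_I\|D_I\bar h_{T,L}\|_{L^2}^2}{\log L}\lesssim \dfrac{T}{\log L}$, since $\sum_I\|D_I\bar h_{T,L}\|_{L^2}^2=\E\int_0^T\!\!\int(D_{s,y}\bar h_{T,L})^2\,dy\,ds\lesssim T$.

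\medskip
\textbf{Step 4: Optimize $L$.} Balancing the two contributions — $T/\log L$ from Step 3 against the roughness correction from Step 2, which is roughly of order $L\cdot(\text{polylog})$ (or whatever Proposition~\ref{p.bdburgers} delivers) — one chooses $L$ to be a small power of $T$, say $L = T^{\theta}$ for a fixed small $\theta>0$; then $\log L\asymp \log T$ and the correction term is $\lesssim T^{\theta}\cdot\mathrm{polylog}(T)=o(T/\log T)$, giving the claimed bound $\Var\log Z_T\le CT/\log T$ for $T\ge 2$.

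\medskip
\textbf{Main obstacle.} The delicate point is Step 3, namely producing a genuinely large ratio $\|D_I\bar h_{T,L}\|_{L^2}/\|D_I\bar h_{T,L}\|_{L^1}$ uniformly over the unit blocks $I$. This requires controlling the localized Malliavin derivative $D_{s,y}\mathscr h(T,x)=\beta\int\phi(y-z)\rho_x(s,z)\,dz$ (the endpoint density of a polymer ending near $x$), and showing that after averaging $x$ over $[0,L]^d$ the result does not concentrate on a small set in $y$; equivalently, one needs the $L^1$ norm of this averaged density (in the noise, for fixed block) to be at most a polynomial-in-$L$ factor smaller than its $L^2$ norm. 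This is exactly where the spatial regularity/subroughness estimate of Proposition~\ref{p.bdburgers} enters, and its suboptimal nature is what caps the final bound at $T/\log T$ rather than a stronger sublinear rate. A secondary technical nuisance is justifying the Malliavin calculus manipulations (differentiability of $\log Z_T$, integrability of derivatives) for this singular SPDE, which can be handled by the mollified-noise regularization already built into the definition of $\xi$.
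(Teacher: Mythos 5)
Your skeleton coincides with the paper's own proof: average $h(t,\cdot)$ over a box of side $M=T^{\theta}$, control the averaging error by the subroughness estimate, and apply Talagrand's $L^1$--$L^2$ inequality to the averaged field (this is precisely \eqref{e.bdlogZt} together with Lemmas~\ref{l.hdiff} and~\ref{l.varhM}, followed by the choice $M=t^\alpha$, $\alpha\in(0,1/2)$). The problem is that the step you yourself single out as the ``main obstacle'' --- a lower bound, polynomial in the box size and uniform over the noise blocks, on the ratio $\|\D_{s,y}h_M(t)\|_2/\|\D_{s,y}h_M(t)\|_1$ --- is left unproved, and the route you propose for closing it is misdirected: you claim this is ``exactly where Proposition~\ref{p.bdburgers} enters''. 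It is not. In the paper, Proposition~\ref{p.bdburgers} is used only for the averaging-error term, $\Var[h(t,0)-h_M(t)]\le R(0)\beta^2 dM^2$ (Lemma~\ref{l.hdiff}); the ratio bound requires no spatial-regularity input at all. (Also, what is needed is that the $L^1(\Omega)$ norm be \emph{at least} a polynomial-in-$M$ factor smaller than the $L^2(\Omega)$ norm; your phrase ``at most a polynomial-in-$L$ factor smaller'' points the inequality the wrong way, and no upper bound on the ratio is ever required.)

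The gap closes by a soft structural argument whose seed is already in your Step 1. By \eqref{e.Dhtx}, $\D_{s,y}h(t,x)$ is $\beta$ times a polymer-measure average of $\phi(x+B_{t-s}-y)$, hence deterministically $0\le \D_{s,y}h(t,x)\le\beta\|\phi\|_\infty$ and $\int_{\R^d}\D_{s,y}h(t,x)\,dy=\beta\|\phi\|_1$; see \eqref{e.bdmade} and \eqref{011401-22}. Averaging over $x\in B_M$ and using stationarity of $h(t,\cdot)$ in $x$ gives, for every fixed $(s,y)$, $\|\D_{s,y}h_M(t)\|_1\le\beta\|\phi\|_1/|B_M|$, while the pointwise bound gives $\|\D_{s,y}h_M(t)\|_2^2\le\beta\|\phi\|_\infty\,\EE[\D_{s,y}h_M(t)]$. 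Hence the ratio is at least $\sqrt{\|\phi\|_\infty\|\phi\|_1^{-1}|B_M|}$, of order $M^{d/2}$, and integrating the $L^2$ bound over $(s,y)\in[0,t]\times\R^d$ gives exactly $\beta^2\|\phi\|_\infty\|\phi\|_1 t$, so Proposition~\ref{p.talagrand} yields $\Var h_M(t)\lesssim t/\log M$; with $M=t^\theta$, $\theta\in(0,1/2)$, the error term of order $M^2$ is harmless. Two further inaccuracies, neither fatal: Proposition~\ref{p.bdburgers} gives the quadratic bound $\beta^2R(0)|x-y|^2$, not $|x-y|$ up to logarithms (the optimization still works since $T^{2\theta}=o(T/\log T)$), and your Step 2 decomposition needs a factor $2$ in front of $\Var\bar h_{T,L}$; moreover the $T/\log T$ ceiling comes from the $\log M\lesssim\log T$ gain available with a polynomially large box, not from the suboptimality of Proposition~\ref{p.bdburgers} as you suggest.
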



The proof of Theorem~\ref{t.supercon191} is presented in
Section~\ref{s.supercon}. We first use it to complete the proof of the main result.

\begin{proof}[Proof of Theorem~\ref{t.mainth}]
Thanks to \eqref{MO} and \eqref{ZM} we can write
\[
\begin{aligned}
\frac{1}{\sqrt{T}}\big( \cO_T-\EE \cO_T\big)=&\frac{1}{\beta^2\sqrt{T}}\big( \la M\ra_T-\EE \la M\ra_T\big)\\
=&\frac{-2}{\beta^2\sqrt{T}}\big( \log Z_T-\EE \log Z_T\big)+\frac{2}{\beta^2\sqrt{T}} M_T\\
=&I_1+I_2.
\end{aligned}
\]
By Theorem~\ref{t.supercon191}, we have $I_1\to0$ as $T\to\infty$. Applying Lemma~\ref{l.wkma191}, we have 
\[
I_2\Rightarrow  N(0,4\sigma^2\beta^{-4}),
\]
with $\sigma^2=2\gamma(\beta)$, which completes the proof.
\end{proof}

\section{Superconcentration of KPZ}
\label{s.supercon}

Suppose that $u$ solves the stochastic heat equation driven by $\xi$, starting from constant,
\begin{equation}
  \label{SHE}
\begin{aligned}
&\partial_t u=\frac12\Delta u+\beta u\xi, \quad \quad t>0,x\in\R^d,\\
&u(0,x)\equiv1,
\end{aligned}
\end{equation}
and define $h(t,x)=\log u(t,x)$, which solves the KPZ equation 
\begin{equation}\label{e.kpz}
\begin{aligned}
&\partial_t h= \frac12\Delta h+\frac12|\nabla h|^2 +\beta \xi-\frac12\beta^2 R(0), \quad\quad t>0,x\in\R^d,\\
&h(0,x)\equiv 0.
\end{aligned}
\end{equation}
Recall that $\xi$ is smooth in the spatial variable. Thus, for
  each $t>0$ and fixed realization of the noise, $u(t,\cdot)$ and
  $h(t,\cdot)$ are actually smooth functions, and the solutions here
  are understood as strong solutions. The product between $u$ and $\xi$ in \eqref{SHE} is in the It\^o sense. Since
$\xi$ is stationary and the initial data is constant, it is
straightforward to check that, for each $t>0$, $\{u(t,x)\}_{x\in\R^d}$
is a stationary random field. Using the Feynman-Kac
formula and the invariance of the law of $\xi$ under the time reversal
transformation and spatial shifts, we conclude that, for each $t>0,x\in\R^d$, 
\begin{equation}\label{e.lawequal}
u(t,x)\stackrel{\text{law}}{=}Z_t.
\end{equation}
Therefore,   Theorem~\ref{t.supercon191} is equivalent with  
\begin{equation}\label{e.superkpz}
\Var\, h(t,x)\leq \frac{Ct}{\log t}
\end{equation}
for some $C>0$ independent of $t\geq2$. The sublinear growth of the variance is called superconcentration \cite{C}, so our goal is to show that the height function, evolving according to the KPZ equation, superconcentrates. Our proof is inspired by the recent work of Chatterjee \cite{C1}, in which he made the crucial observation that the superconcentration is equivalent with what he called the ``subroughness''. 

\subsection{Talagrand's $L^1-L^2$ bound}

The first tool we need is the concentration inequality by
Talagrand. Recall that $\xi$ is constructed from the space-time white
noise $\eta$ through a spatial convolution \eqref{e.defxi},
where $\phi$ is a smooth kernel. Let $\D$ denote the Malliavin derivative with respect to $\eta$, and define $H=L^2(\R_+\times\R^d)$ and use $\la\cdot,\cdot\ra$ to denote its inner product. For smooth random variable $X$, which is measurable with respect to $(\eta(s,y))_{s\geq0,y\in\R^d}$, we write 
\[
\D X=(\D_{s,y}X)_{s\ge0,y\in\R^d},
\]
which is an $H-$valued random variable. For any $p\geq1$, we use $\|\cdot\|_p$ to represent the norm of $L^p(\Omega)$.

We will show that the Malliavin derivative of the KPZ solution $h(t,x)$ (or the free energy $\log Z_t$) is explicitly related to the polymer density, see \eqref{e.Dhtx} below. This is not surprising: for the discrete polymer model with the underlying random environment given by i.i.d. random variables on the lattice, the derivative of $\log Z_t$ with respect to the random variable at a given lattice point is precisely the probability of the polymer path passing through that point. The only reason we use the language of Malliavin calculus here is because our   random environment is constructed from the spacetime white noise. The usage will be minimal though -- besides the following proposition which has a well-known discrete counterpart, see \cite[Theorem 5.1]{C}, we only need the following fact in the proof of Lemma~\ref{l.varhM} below:  if $X=\int_0^\infty \int_{\R^d} f(s,y) \eta(s,y)dyds$ for some $f\in H$, then $\D_{s,y} X=f(s,y)$. For a detailed introduction to Malliavin calculus, we refer to \cite[Chapter 1]{nualart}.

\begin{proposition}\label{p.talagrand}
Assume $X$ is a smooth random variable measurable with respect to
$(\eta(s,y))_{s\geq0,y\in\R^d}$, and $A_{s,y}$ is a function such that
$$
\|\D_{s,y}X\|_2\leq A_{s,y}\quad\mbox{ for all }s\geq0,y\in\R^d.
$$
Then, we have 
\begin{equation}\label{e.varbd}
\Var X\leq C\int_0^\infty\int_{\R^d}  \frac{A_{s,y}^2}{1+\log
  \frac{A_{s,y}}{\|\D_{s,y}X\|_1}}  dyds,
\end{equation}
where $C>0$ is a universal constant.
\end{proposition}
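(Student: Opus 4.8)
The plan is to follow the route indicated in the paper: this is Talagrand's $L^1$--$L^2$ variance bound specialized to a Gaussian (white-noise) Hilbert space, and the natural proof is by tensorization/martingale decomposition combined with the one-dimensional Gaussian $L^1$--$L^2$ inequality. First I would reduce to the finite-dimensional Gaussian case. Since $\eta$ is a spacetime white noise, fix a complete orthonormal basis $(e_i)_{i\ge1}$ of $H=L^2(\R_+\times\R^d)$ and let $g_i=\langle \eta, e_i\rangle$, so that $X$ is a measurable function of the i.i.d. standard Gaussians $(g_i)_{i\ge1}$; by an approximation argument (conditioning on the first $n$ coordinates, using that $X$ is smooth so the conditional expectations converge in $L^2$ and the Malliavin derivative passes to the limit) it suffices to prove the bound for $X=F(g_1,\dots,g_n)$ a smooth function on $\R^n$ with the standard Gaussian measure $\gamma_n$, with $\D X$ replaced by $\nabla F$ and the integral $\int_0^\infty\int_{\R^d}(\cdot)\,dy\,ds$ replaced by $\sum_{i=1}^n$ (the coefficients $A_{s,y}$ correspondingly becoming $A_i$).

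Next, I would run the standard martingale (Doob) decomposition along the coordinates. Write $X-\E X=\sum_{i=1}^n \Delta_i$ where $\Delta_i=\E[X\mid g_1,\dots,g_i]-\E[X\mid g_1,\dots,g_{i-1}]$; these are orthogonal, so $\Var X=\sum_i \E\Delta_i^2$. For each fixed $i$, freezing $(g_1,\dots,g_{i-1})$ and integrating out $(g_{i+1},\dots,g_n)$ produces a smooth function of the single variable $g_i$, and $\Delta_i$ is its centered version (with respect to the $g_i$-marginal). The heart of the matter is then the one-variable inequality: for a smooth $f:\R\to\R$ with standard Gaussian $\gamma_1$,
\[
\Var_{\gamma_1}(f)\le C\,\frac{\|f'\|_{L^2(\gamma_1)}^2}{1+\log\big(\|f'\|_{L^2(\gamma_1)}/\|f'\|_{L^1(\gamma_1)}\big)}.
\]
Applying this to the conditional function, taking expectation over $(g_1,\dots,g_{i-1})$, bounding $\|f'\|_{L^2(\gamma_1)}$ pointwise by $A_i$ inside the (decreasing in that argument) function $t\mapsto t^2/(1+\log(t/\|f'\|_1))$, and finally using Jensen to replace $\E\|f'\|_{L^1(\gamma_1)}$ by something comparable to $\|\D_i X\|_1$, sums to exactly the claimed estimate. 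Undoing the finite-dimensional reduction and identifying $\sum_i$ with the $H$-integral gives \eqref{e.varbd}.

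The main obstacle, and the one genuinely nontrivial ingredient, is the one-dimensional Gaussian $L^1$--$L^2$ bound displayed above; everything else is bookkeeping. The way I would prove it is via the Ornstein--Uhlenbeck semigroup and hypercontractivity: write $\Var_{\gamma_1}(f)=\int_0^\infty e^{-2t}\|P_t f'\|_{L^2(\gamma_1)}^2\,dt$ (the semigroup representation of the variance, integrating $\frac{d}{dt}\|P_tf-\E f\|_2^2$), split the integral at a threshold $t^\ast$ to be optimized: for $t\le t^\ast$ use the crude bound $\|P_t f'\|_2\le\|f'\|_2$, and for $t\ge t^\ast$ use hypercontractivity $\|P_t f'\|_{L^2}\le \|f'\|_{L^{q(t)}}$ with $q(t)=1+e^{-2t}$ together with interpolation $\|f'\|_{L^{q}}\le\|f'\|_1^{\theta}\|f'\|_2^{1-\theta}$ to gain a factor that decays once $\|f'\|_2/\|f'\|_1$ is large. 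Choosing $t^\ast\sim\log(\|f'\|_2/\|f'\|_1)$ balances the two pieces and yields the logarithmic denominator. (This is exactly the argument in \cite[Chapter 5]{C} / \cite{talagrand}; I would cite it rather than reproduce all constants.) One point to be careful about: the inequality must be stated so that it is vacuous/trivially true when $\|f'\|_2=\|f'\|_1$ — i.e. the denominator $1+\log(\cdot)$ is always $\ge 1$, so the bound degrades gracefully to the Gaussian Poincaré inequality $\Var_{\gamma_1}(f)\le\|f'\|_2^2$, and correspondingly one must interpret the integrand in \eqref{e.varbd} as $0$ when $\|\D_{s,y}X\|_2=0$.
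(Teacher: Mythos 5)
Your overall plan (finite\nobreakdash-dimensional reduction, Doob decomposition along coordinates, one\nobreakdash-dimensional $L^1$--$L^2$ inequality, Jensen) has a genuine gap at the tensorization step. After freezing $g_1,\dots,g_{i-1}$ and integrating out the later coordinates, the one\nobreakdash-dimensional inequality gives a bound in terms of the \emph{conditional} norms $a_i=\|\partial_i F_i(g_{<i},\cdot)\|_{L^2(\gamma_1)}$ and $b_i=\|\partial_i F_i(g_{<i},\cdot)\|_{L^1(\gamma_1)}$, which are random functions of $g_{<i}$. Your substitution ``bound $\|f'\|_{L^2(\gamma_1)}$ pointwise by $A_i$'' is not available: $A_i$ only dominates the unconditional norm $\|\D_i X\|_{L^2(\Omega)}$, i.e.\ the \emph{average} over $g_{<i}$ of $a_i^2$, and $a_i$ may exceed $A_i$ on an event of positive probability; moreover $t\mapsto t^2/(1+\log(t/b))$ is increasing in $t$ on $t\ge b$, not decreasing, so there is no monotonicity to exploit in the direction you need. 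The subsequent ``Jensen'' step also fails: the function $\psi(a,b)=a^2/(1+\log(a/b))$ is not jointly concave on $\{a\ge b>0\}$ (for fixed $a$ it is convex in $b$ wherever $1+\log(a/b)<2$), so $\EE[\psi(a_i,b_i)]\lesssim\psi\big((\EE a_i^2)^{1/2},\EE b_i\big)$ does not follow from Jensen. This is precisely why neither Talagrand's original argument nor its Gaussian versions proceed coordinate-by-coordinate through a one-dimensional inequality. A secondary, smaller issue: since the right-hand side of \eqref{e.varbd} is a nonlinear functional of the derivative evaluated pointwise in $(s,y)$, ``identifying $\sum_i$ with the $H$-integral'' for a general orthonormal basis is not Parseval and would itself require a limiting argument (say, normalized indicators of shrinking boxes plus continuity of $(s,y)\mapsto\D_{s,y}X$).

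The paper sidesteps both problems by never decomposing along coordinates. It uses the semigroup representation $\Var X=\int_0^\infty e^{-t}\,\EE\la \D X,\cP_t\D X\ra\,dt$ for the Ornstein--Uhlenbeck semigroup associated with $\eta$ (quoting \cite{DO}), exchanges the $t$-integral with the $H$-integral, and then estimates, for each fixed $(s,y)$, the quantity $\int_0^\infty e^{-t}\,\EE[\D_{s,y}X\,\cP_t\D_{s,y}X]\,dt$ verbatim as in \cite[Theorem 5.1]{C}: hypercontractivity gives $\EE[\D_{s,y}X\,\cP_t\D_{s,y}X]\le\|\D_{s,y}X\|_2\,\|\D_{s,y}X\|_{1+e^{-2t}}$, and interpolation between $L^1$ and $L^2$ followed by the $t$-integration (split at a threshold, as you describe) produces the logarithmic denominator with the \emph{global} norms $A_{s,y}$ and $\|\D_{s,y}X\|_1$ appearing directly, so the conditional-norm issue never arises. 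In other words, the ingredient you correctly identified as the heart of the matter (semigroup representation plus hypercontractivity plus splitting the time integral) is the whole proof --- but it must be applied to each directional derivative $\D_{s,y}X$ as a random variable on the full probability space, not to one-dimensional conditional slices; rewritten that way, with the Doob decomposition and the finite-dimensional reduction dropped, your argument becomes correct and coincides with the paper's.
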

%

\begin{proof}
First, we have the following variance representation (see e.g. \cite[Equation (4.6)]{DO})
\[
\Var X=\int_0^\infty e^{-t} \EE [\la \D X, \cP_t \D X\ra] dt,
\]
where $\cP_t$ is the Ornstein-Uhlenbeck semigroup, associated with $\eta$. Then we write the inner product explicitly and interchange the order of integration:
\[
 \EE \la \D X, \cP_t \D X\ra =\int_0^\infty \int_{\R^d} \EE [\D_{s,y}X
 \cP_t \D_{s,y}X] dyds .
 \]
 This leads to
 \[
 \Var X=\int_0^\infty \int_{\R^d}  \left(\int_0^\infty e^{-t}  \EE [\D_{s,y}X \cP_t \D_{s,y}X]  dt\right) dyds. 
 \]
 For each $y,s$, we claim that 
 \begin{equation}\label{e.101}
 \int_0^\infty e^{-t}  \EE [\D_{s,y}X \cP_t \D_{s,y}X]  dt\leq  C\frac{ A_{s,y}^2}{1+\log \frac{A_{s,y}}{\|\D_{s,y}X\|_1}},
 \end{equation}
 from which the conclusion of the proposition follows.
 The proof of \eqref{e.101} now follows verbatim \cite[Proof of Theorem 5.1]{C}.
\end{proof}

\subsection{Spatial increments of KPZ}

The goal of this section is to show the following version of
``subroughness'', which provides an upper bound on the spatial
variations of the height function $h(t,\cdot)$. Similar estimates have
been   derived in \cite[Lemma 5.3]{dcl}.
\begin{proposition}\label{p.bdburgers}
We have 
\[
\EE |h(t,x)-h(t,y)|^2 \leq \beta^2 R(0) |x-y|^2\quad\mbox{ for all }t>0, x,y\in\R^d.
\]
\end{proposition}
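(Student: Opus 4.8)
The plan is to reduce the estimate to a bound on the directional gradient of the height function, namely $\EE|e\cdot\nabla h(t,0)|^2\le\beta^2R(0)$ for every unit vector $e$ and every $t>0$, and then to prove that bound via the Feynman--Kac representation. For the reduction, note that since $h(0,\cdot)\equiv0$ and $\xi$ is spatially stationary, $h(t,\cdot)$ is a smooth stationary random field for each fixed $t$. Hence, for $x,y\in\R^d$ with $v:=x-y\neq0$, writing $h(t,x)-h(t,y)=\int_0^1 v\cdot\nabla h(t,y+\theta v)\,d\theta$ and applying Minkowski's integral inequality together with stationarity gives
\[
\big(\EE|h(t,x)-h(t,y)|^2\big)^{1/2}\le\int_0^1\big(\EE|v\cdot\nabla h(t,y+\theta v)|^2\big)^{1/2}d\theta=\big(\EE|v\cdot\nabla h(t,0)|^2\big)^{1/2},
\]
which is the asserted inequality once the gradient bound is known.

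For the gradient bound, write $u(t,x)=\E\big[\exp(\beta\int_0^t\xi(t-r,x+B_r)\,dr-\tfrac12\beta^2R(0)t)\big]$ with $B$ a Brownian motion from the origin; differentiating under $\E$ yields
\[
e\cdot\nabla h(t,x)=\frac{e\cdot\nabla u(t,x)}{u(t,x)}=\beta\,\hat\E^{\,x}_t\Big[\int_0^t e\cdot\nabla\xi(t-r,x+B_r)\,dr\Big],
\]
where $\hat\E^{\,x}_t$ is expectation under the polymer-type probability measure on paths with density proportional to $\exp(\beta\int_0^t\xi(t-r,x+B_r)\,dr)$. Since $\hat\E^{\,x}_t$ is a probability measure, Jensen's inequality gives $(e\cdot\nabla h(t,x))^2\le\beta^2\,\hat\E^{\,x}_t[(\int_0^t e\cdot\nabla\xi(t-r,x+B_r)\,dr)^2]$, and after introducing two independent replica paths and taking $\EE$ one must show the resulting time-integrated covariance of $\nabla\xi$ along the two replicas is at most $\beta^2R(0)$.

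The main obstacle is that $\hat\E^{\,x}_t$ itself depends on the noise, so $\EE$ does not commute with it and one cannot directly compute the Gaussian covariance. I would remove the coupling by Gaussian integration by parts in the white noise $\eta$ underlying $\xi$: contracting one $\nabla\xi$ factor against the exponential polymer weight reproduces a convolution against $\nabla\phi$ and closes an integral relation for the quantity being estimated, while contracting it against the other explicit $\nabla\xi$ forces equal times and brings down $-\Delta R$ evaluated at the gap between the two replica positions. Using that $-\Delta R$ is a nonnegative-definite kernel (hence bounded by $-\Delta R(0)$) together with the diffusive smoothing carried by the polymer endpoint law $q(\tau,\cdot)$ of $B_\tau$ --- concretely $\|q(\tau,\cdot)\star\phi\|_{L^2}\le\|q(\tau,\cdot)\|_{L^1}\|\phi\|_{L^2}=R(0)^{1/2}$ by Young's inequality --- the contributions should reassemble so that the time integration telescopes to $\beta^2R(0)$; morally this constant is the stationary Edwards--Wilkinson value, $\beta^2\int_0^\infty\big(e^{s\Delta}(-\Delta R)\big)(0)\,ds=\beta^2R(0)$. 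Arranging the bookkeeping so that the constant lands exactly on $\beta^2R(0)$, rather than on some larger multiple, is where the bulk of the work lies.

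An equivalent route is to use that $g:=e\cdot\nabla h$ solves the \emph{linear} transport--diffusion equation $\partial_t g=\tfrac12\Delta g+\nabla h\cdot\nabla g+\beta\,e\cdot\nabla\xi$ with $g(0,\cdot)\equiv0$; It\^o's formula and stationarity then give $\frac{d}{dt}\EE|g(t,0)|^2=-\beta^2\,e\cdot(\mathrm{Hess}\,R)(0)\,e-\EE|\nabla g(t,0)|^2-\EE[(\Delta h(t,0))\,g(t,0)^2]$, so one is again left to control the transport term $\EE[(\Delta h)\,g^2]$. In either approach the smoothing $\tfrac12\Delta$ is indispensable: the bare noise increment alone satisfies $\EE\big|\beta\int_0^t(\xi(s,x)-\xi(s,y))\,ds\big|^2=2\beta^2 t\,(R(0)-R(x-y))$, which for $t$ of order one exceeds $\beta^2R(0)|x-y|^2$ whenever $\phi$ is sufficiently concentrated, so any estimate that discards the Laplacian is doomed.
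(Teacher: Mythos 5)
Your reduction to the gradient bound $\EE|\nabla h(t,\cdot)|^2\le\beta^2R(0)$ (via the fundamental theorem of calculus, Minkowski's integral inequality and stationarity) is correct and is also how the paper finishes. But that gradient bound \emph{is} the entire content of the proposition, and neither of your two routes actually establishes it: the first ends with ``arranging the bookkeeping \dots is where the bulk of the work lies,'' and the second ends with the uncontrolled transport term $\EE[(\Delta h)\,g^2]$. As written, this is a genuine gap, not a routine verification. Indeed, the Jensen/replica route is in real danger of not closing: after Gaussian integration by parts you face a time integral of two-replica quantities involving $-\mathrm{Hess}\,R$ evaluated at the replica gap, weighted by the noise-dependent polymer measure, and there is no evident mechanism forcing the constant down to exactly $\beta^2R(0)$ rather than something growing in $t$; your appeal to the Edwards--Wilkinson value is only a computation for the \emph{linear} equation.

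The paper's proof of the gradient bound is an exact identity obtained by computing $\EE h(t,x)$ in two ways, and it sidesteps all of the replica bookkeeping. Taking expectations in the Duhamel form of \eqref{e.kpz} and using spatial stationarity gives $\EE h(t,x)=\frac12\int_0^t f(s)\,ds-\frac12\beta^2R(0)t$ with $f(s):=\EE|\nabla h(s,\cdot)|^2$ (the noise term has mean zero). On the other hand, by \eqref{e.lawequal} and the semimartingale decomposition \eqref{ZM}, $\EE h(t,x)=\EE\log Z_t=-\frac12\EE\la M\ra_t=-\frac{\beta^2}{2}\int_0^t g(s)\,ds$, where $g(s)\in[0,R(0)]$ is the expected overlap. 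Equating and differentiating in $t$ yields $f(t)=\beta^2\big(R(0)-g(t)\big)\le\beta^2R(0)$. Note that this identity is precisely the quantitative version of the structure you were hoping for: the nonlinearity can only \emph{decrease} the stationary gradient variance below the Edwards--Wilkinson value, and the deficit is exactly $\beta^2$ times the overlap. If you want to salvage your write-up, replace the two speculative routes by this two-line computation of $\EE h(t,x)$.
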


\begin{proof}
By the mild formulation of the KPZ equation \eqref{e.kpz}, we have
\[
\begin{aligned}
h(t,x)=&\frac12\int_0^t\int_{\R^d} q_{t-s}(x-y)|\nabla h(s,y)|^2 dyds\\
&+\beta\int_0^t\int_{\R^d} q_{t-s}(x-y)\xi(s,y)dyds-\frac12\beta^2R(0)t.
\end{aligned}
\]
Here $q_t(x)=(2\pi t)^{-d/2}e^{-|x|^2/(2t)}$ is the standard heat kernel.

Taking the expectation on both sides, we obtain 
\[
\begin{aligned}
 \EE h(t,x)=\frac12\int_0^t\int_{\R^d} q_{t-s}(x-y)\EE[|\nabla h(s,y)|^2] dyds-\frac12\beta^2R(0)t
\end{aligned}
\]
Since $h(s,\cdot)$ is stationary in the $x$ variable (for each $s\geq0$), we denote $f(s)=\EE[|\nabla h(s,y)|^2]$, then the above identity becomes 
\begin{equation}\label{e.1111}
\EE h(t,x)=\frac12\int_0^t f(s)ds-\frac12\beta^2R(0)t.
\end{equation}
  On the other hand, recalling \eqref{ZM}, we have 
\begin{equation}\label{e.1112}
\EE h(t,x)=\EE \log Z_t=-\frac12 \EE \la M\ra_t=-\frac12\beta^2\int_0^t g(s)ds,
\end{equation}
where
$$
g(s):=\int_{\R^{2d}} \EE[\rho(s,y)\rho(s,y')] R(y-y')dydy',
$$
which is a non-negative, continuous function, bounded by $R(0)$. Combining \eqref{e.1111} and \eqref{e.1112}, we have
\[
\beta^2R(0)t -\beta^2\int_0^t g(s)ds=\int_0^t f(s)ds,\quad\quad t\geq0,
\]
which implies that $f(t)=\beta^2[R(0)-g(t)]$. In particular, we have 
\[
0\leq f(t)\leq \beta^2R(0).
\] By the second moment bound on $\nabla h(t,\cdot)$, we have
\[
 \EE  |h(t,x)-h(t,y)|^2\leq \beta^2R(0) |x-y|^2.
\]
The proof is complete.
\end{proof}

\begin{remark}\label{r.sub}
The estimate derived in Proposition~\ref{p.bdburgers} is sub-optimal for $|x-y|\gg1$. For example, in $d=1$, it is expected that $\EE |h(t,x)-h(t,y)|^2 \sim |x-y|$ in the stationary regime: when $\xi$ is a $1+1$ spacetime white noise, the invariant measure for $h$ is a two-sided Brownian motion which attains such a bound. For a colored noise which decorrelates sufficiently rapidly, there is an interesting conjecture in \cite[Conjecture 3]{bakhtin} along the same line. The above proof does not exploit the spatial mixing property of $\nabla h(t,\cdot)$, thereby leads to a sub-optimal estimate.
\end{remark}

\subsection{The Benjamini-Kalai-Schramm trick}

In this section, we adapt the standard Benjamini-Kalai-Schramm trick \cite{BKS} to complete the proof of Theorem~\ref{t.supercon191}. From now on, we  abuse the notations and also let $\|\phi\|_\infty,\|\phi\|_1$ represent the $L^\infty(\R^d)$ and $L^1(\R^d)$ norm of $\phi$.

  Let $B_M=[-M,M]^d$ be the box centered at the origin with $M$ to be
  chosen later (eventually to be large). Define 
\[
h_M(t)=\frac{1}{|B_M|}\int_{B_M} h(t,x) dx,
\]
with $|B_M|=(2M)^d$.
To estimate $\Var \log Z_t=\Var\, h(t,0)$, we write $h(t,0)=h(t,0)-h_M(t)+h_M(t)$,
and use the estimate 
\begin{equation}\label{e.bdlogZt}
\Var \log Z_t\leq 2 \Var [h(t,0)-h_M(t)]+2\Var[h_M(t)].
\end{equation}
Then Theorem~\ref{t.supercon191} is a direct consequence of the following two lemmas.

\begin{lemma}\label{l.hdiff}
  We have
  $$
  \Var [h(t,0)-h_M(t)]\leq R(0)\beta^2d M^2\quad\mbox{for all } t,\,M>0.
  $$
 \end{lemma}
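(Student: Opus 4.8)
The plan is to bound $\Var[h(t,0)-h_M(t)]$ by the spatial increment estimate from Proposition~\ref{p.bdburgers}. First I would write
\[
h(t,0)-h_M(t)=\frac{1}{|B_M|}\int_{B_M}\big(h(t,0)-h(t,x)\big)dx,
\]
so that $h(t,0)-h_M(t)$ is an average of spatial increments of $h$ over the box $B_M$. Since variance of a random variable is controlled by its second moment about any point, and more directly since $\Var[Y]\le \EE|Y-c|^2$, it is convenient to recenter: $\Var[h(t,0)-h_M(t)]\le \EE\big|h(t,0)-h_M(t)\big|^2$ — but this is not quite what we want since $\EE[h(t,0)-h_M(t)]=0$ by stationarity of $h(t,\cdot)$ in $x$, so in fact $\Var[h(t,0)-h_M(t)]=\EE|h(t,0)-h_M(t)|^2$ exactly.

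Next I would apply Jensen's inequality (or Cauchy--Schwarz) to pull the square inside the spatial average:
\[
\EE\big|h(t,0)-h_M(t)\big|^2
=\EE\Big|\frac{1}{|B_M|}\int_{B_M}\big(h(t,0)-h(t,x)\big)dx\Big|^2
\le \frac{1}{|B_M|}\int_{B_M}\EE\big|h(t,0)-h(t,x)\big|^2\,dx.
\]
Then invoking Proposition~\ref{p.bdburgers} with $y=0$ gives $\EE|h(t,0)-h(t,x)|^2\le \beta^2 R(0)|x|^2$, so the right-hand side is at most
\[
\frac{\beta^2 R(0)}{|B_M|}\int_{B_M}|x|^2\,dx.
\]
Finally I would estimate the elementary integral: for $x=(x_1,\dots,x_d)\in B_M=[-M,M]^d$ we have $|x|^2=\sum_{i=1}^d x_i^2\le dM^2$, hence $\frac{1}{|B_M|}\int_{B_M}|x|^2dx\le dM^2$, which yields the claimed bound $\Var[h(t,0)-h_M(t)]\le R(0)\beta^2 d M^2$. (One could even get the sharper constant $dM^2/3$ by computing $\frac{1}{2M}\int_{-M}^M x_i^2\,dx_i=M^2/3$, but the stated bound $dM^2$ suffices.)

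There is no real obstacle here; the only point requiring a moment's care is justifying that $\EE[h(t,0)-h_M(t)]=0$ so that the variance equals the second moment — this follows because $h(t,\cdot)$ is stationary in the spatial variable, hence $\EE h(t,x)$ is independent of $x$ and equals $\EE h_M(t)$. (Even without this observation, one has $\Var[Y]\le\EE|Y|^2$ for the centered quantity, so the argument goes through regardless.) The interchange of $\EE$ and the spatial integral is justified by Tonelli's theorem since the integrand is nonnegative, or by the finiteness of the second moments established above.
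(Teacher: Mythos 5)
Your proof is correct and follows essentially the same route as the paper: use stationarity of $h(t,\cdot)$ to identify the variance with the second moment, average the increment over $B_M$, and invoke Proposition~\ref{p.bdburgers}. The only (immaterial) difference is that you move the square inside the spatial average via Jensen's inequality, whereas the paper uses the triangle (Minkowski) inequality in $L^2$ and then bounds $\bigl(\tfrac{1}{|B_M|}\int_{B_M}|x|\,dx\bigr)^2\le dM^2$; both give the stated bound.
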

 
\begin{lemma}\label{l.varhM}
   We have
  $$
  \Var[h_M(t)] \leq \frac{2C\beta^2\|\phi\|_\infty
    \|\phi\|_{1} t}{2+\log \Big(2^d\|\phi\|_\infty\|\phi\|^{-1}_{1}\Big)+d \log M}\quad\mbox{for all } t>0,\,M\ge1,
  $$
 where $C$ is the constant appearing in Proposition \ref{p.talagrand}.
 \end{lemma}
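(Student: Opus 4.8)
The plan is to apply Talagrand's bound (Proposition~\ref{p.talagrand}) to the random variable $X = h_M(t)$, so I first need to compute the Malliavin derivative $\D_{s,y} h_M(t)$ and produce a pointwise upper bound $A_{s,y}$ on $\|\D_{s,y} h_M(t)\|_2$, together with a matching lower bound on $\|\D_{s,y} h_M(t)\|_1$. Since $h(t,x) = \log u(t,x)$ and $u$ solves \eqref{SHE} driven by $\xi = \phi * \eta$, the chain rule gives $\D_{s,y} h(t,x) = u(t,x)^{-1} \D_{s,y} u(t,x)$; by the standard computation (the continuous analogue of \cite[Theorem 5.1]{C}), the Malliavin derivative of $\log Z_t$, equivalently $h(t,x)$, is $\beta \int_{\R^d} \phi(z-y)\, \rho_{s,x}(t,z)\, dz$ for $s \le t$ and $0$ for $s>t$, where $\rho_{s,x}(t,\cdot)$ is the endpoint density at time $t$ of the polymer run from time $s$ conditioned to end near $x$ — in any case, the key structural fact I will use is that $\D_{s,y} h(t,x)$ is $\beta$ times a convolution of $\phi$ against a probability density in the $y$-variable. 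Averaging over $x \in B_M$,
\[
\D_{s,y} h_M(t) = \frac{\beta}{|B_M|}\int_{B_M}\!\!\int_{\R^d} \phi(z-y)\, \rho_{s,x}(t,z)\, dz\, dx \cdot \1_{\{s\le t\}},
\]
which is of the form $\beta \int_{\R^d} \phi(z-y) \, \mu_{s}(dz)$ where $\mu_s$ is (for each realization) a sub-probability measure of total mass $\le 1$. Hence $0 \le \D_{s,y} h_M(t) \le \beta \|\phi\|_\infty \1_{\{s\le t\}}$ pointwise, and integrating in $y$ gives $\int_{\R^d} \D_{s,y} h_M(t)\, dy \le \beta \|\phi\|_1 \1_{\{s\le t\}}$.

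With these bounds in hand I take $A_{s,y} = \beta \|\phi\|_\infty \1_{\{s \le t\}}$, which is a legitimate choice since $\|\D_{s,y} h_M(t)\|_2 \le \beta\|\phi\|_\infty \1_{\{s\le t\}}$. For the $L^1$ denominator I need a \emph{lower} bound on $\|\D_{s,y} h_M(t)\|_1 = \EE|\D_{s,y} h_M(t)| = \EE \D_{s,y} h_M(t)$ (the derivative is non-negative). The trick here is that I do not control $\|\D_{s,y} h_M(t)\|_1$ pointwise in $y$, but I only need the bound after integrating $A_{s,y}^2 / (1 + \log(A_{s,y}/\|\D_{s,y}h_M(t)\|_1))$ over $y$. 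Since $t \mapsto A^2/(1+\log(A/t))$ is increasing in $t$ on $(0,A]$, and using $\int_{\R^d}\|\D_{s,y}h_M(t)\|_1\, dy = \EE \int_{\R^d}\D_{s,y} h_M(t)\, dy$, I'd like to say that the "effective support" of $y \mapsto \D_{s,y}h_M(t)$ has volume at least of order $M^d$, because $\mu_s$ lives essentially inside $B_M$ (enlarged by the diameter of $\mathrm{supp}\,\phi$) and the convolution with $\phi$ spreads mass over a region of volume $\gtrsim M^d$. The cleanest way to make this rigorous is Jensen / a convexity argument: plug $A_{s,y}=A$ constant, restrict the $y$-integral to a box $B_{M'}$ of side comparable to $M$ containing the support of $y\mapsto \EE\,\D_{s,y}h_M(t)$ (valid for $M\ge 1$ once $\mathrm{supp}\,\phi$ is fixed, absorbing the fixed diameter into constants), and apply Jensen to the concave function $r \mapsto 1/(1+\log(A/r))$ of $r = \|\D_{s,y}h_M(t)\|_1$ with respect to normalized Lebesgue measure on that box; the average of $r$ over the box is $\le \beta\|\phi\|_1/|B_{M'}| \lesssim \|\phi\|_1 M^{-d}$, producing a denominator $\gtrsim 1 + \log(\|\phi\|_\infty \|\phi\|_1^{-1} M^d)$.

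Carrying this out, \eqref{e.varbd} becomes
\[
\Var h_M(t) \le C \int_0^t \int_{B_{M'}} \frac{\beta^2\|\phi\|_\infty^2}{1 + \log\!\big(\beta\|\phi\|_\infty / \|\D_{s,y} h_M(t)\|_1\big)}\, dy\, ds,
\]
and after the Jensen step the $y$-integral over $B_{M'}$ contributes a factor $|B_{M'}|$ which cancels against one power of $\|\phi\|_\infty$ via $|B_{M'}| \cdot \|\phi\|_\infty \asymp$ nothing — rather, more carefully, the bound $\int_{B_{M'}} (\cdots) dy \le |B_{M'}| \cdot \beta^2\|\phi\|_\infty^2 / (1 + \log(\cdots M^d))$ and then one uses $|B_{M'}|\,\|\phi\|_\infty \le $ a constant multiple of $\|\phi\|_1 \cdot (2M)^d/|B_{M'}|^{\,\cdot}$ — the bookkeeping is arranged so that the final form has $\|\phi\|_\infty\|\phi\|_1 t$ on top and $2 + \log(2^d \|\phi\|_\infty\|\phi\|_1^{-1}) + d\log M$ on the bottom, matching the statement with the constant $2C$ coming from Jensen plus the enlargement of the box. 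The main obstacle I anticipate is precisely this last step: getting the lower bound on $\|\D_{s,y}h_M(t)\|_1$ in a form that survives integration in $y$ and yields the clean logarithmic denominator with the stated constants, rather than a weaker $\log M$ with an unspecified coefficient. The computation of the Malliavin derivative and the pointwise $A_{s,y}$ bound are routine given the discrete analogue in \cite{C}; the quantitative $L^1$–$L^2$ gap via the localization/Jensen argument is where the care is needed.
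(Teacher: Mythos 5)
Your setup (BKS averaging plus Proposition~\ref{p.talagrand}, with $\D_{s,y}h(t,x)$ identified as $\beta$ times the polymer average of $\phi(x+B_{t-s}-y)$, hence $0\le \D_{s,y}h(t,x)\le \beta\|\phi\|_\infty$ and $\int_{\R^d}\D_{s,y}h(t,x)\,dy=\beta\|\phi\|_1$) matches the paper's, but the execution of the Talagrand step has a genuine gap. You take $A_{s,y}=\beta\|\phi\|_\infty\1_{\{s\le t\}}$, constant in $y$, so $\int_0^t\int_{\R^d}A_{s,y}^2\,dy\,ds=\infty$, and your rescue --- restricting the $y$-integral to a box $B_{M'}$ ``containing the support of $y\mapsto\EE\,\D_{s,y}h_M(t)$'' --- is not available: that function is strictly positive on all of $\R^d$, since $\D_{s,y}h(t,x)$ is the polymer expectation of $\phi(x+B_{t-s}-y)$ and the law of $B_{t-s}$ under the tilted measure has full support; only $\phi$ is compactly supported, not the endpoint distribution. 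So there is no box of side $O(M)$ outside which the derivative vanishes, and any truncation would require a separate treatment of the (Gaussian-type) tails, which your sketch does not supply. A second, smaller problem: your Jensen step needs $r\mapsto 1/(1+\log(A/r))$ to be concave, but it is concave only for $r\le A/e$ and convex on $(A/e,A]$, so the inequality does not follow as stated; moreover you only record the averaged bound $\int_{\R^d}\|\D_{s,y}h_M(t)\|_1\,dy\le\beta\|\phi\|_1$, whereas what is needed is a pointwise-in-$y$ control of $\|\D_{s,y}h_M(t)\|_1$.

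The paper sidesteps both issues by letting $A_{s,y}$ depend on $y$: it sets $A_{s,y}^2:=\frac{\beta\|\phi\|_\infty}{|B_M|}\int_{B_M}\EE\,\D_{s,y}h(t,x)\,dx=\beta\|\phi\|_\infty\,\EE\,\D_{s,y}h_M(t)$, which dominates $\EE|\D_{s,y}h_M(t)|^2$ by \eqref{e.bdmade}. Stationarity of $h(t,\cdot)$ gives the pointwise estimate $\|\D_{s,y}h_M(t)\|_1=\frac{1}{|B_M|}\int_{B_M}\EE\,\D_{s,y-x}h(t,0)\,dx\le \beta\|\phi\|_1/|B_M|$, hence $A_{s,y}/\|\D_{s,y}h_M(t)\|_1\ge\sqrt{\|\phi\|_\infty\|\phi\|_1^{-1}|B_M|}$ uniformly in $(s,y)$ --- no Jensen and no truncation needed --- while $\int_0^t\int_{\R^d}A_{s,y}^2\,dy\,ds=\beta^2\|\phi\|_\infty\|\phi\|_1\,t$ exactly, by \eqref{011401-22} and Fubini. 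This $y$-dependent choice of $A_{s,y}$ is the step you would need to adopt (or an equivalent tail estimate you would need to prove) to close your argument and obtain the stated constants.
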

 
 \begin{proof}[Proof of Theorem~\ref{t.supercon191}]
 It suffices to apply the above two lemmas in \eqref{e.bdlogZt} and pick $M=t^\alpha$, where $\alpha\in(0,1/2)$ can be arbitrary. 
 \end{proof}

 \begin{proof}[Proof of Lemma~\ref{l.hdiff}]
 First, because $h(t,\cdot)$ is stationary in the $x$ variable, we have $\EE [h(t,0)-h_M(t)]=0$. 
 Hence, by triangle inequality we have
 \begin{align*}
   \Var [h(t,0)-h_M(t)]&=\|h(t,0)-h_M(t)\|_2^2   \\
     &\leq \left(\frac{1}{|B_M|}\int_{B_M} \|h(t,0)-h(t,x)\|_2 dx\right)^2.
 \end{align*}
 Applying Proposition~\ref{p.bdburgers}, we have 
 \[
 \Var[h(t,0)-h_M(t)] \leq R(0)\beta^2\left(\frac{1}{|B_M|}\int_{B_M} |x| dx\right)^2 \leq R(0)\beta^2 dM^2.
 \]
 \end{proof}
 
 \begin{proof}[Proof of Lemma~\ref{l.varhM}]
 Recall that, by the Feynman-Kac formula we get the following
   representation for the solution of \eqref{SHE}:
   $$
u(t,x)=\E \Big[\exp\left\{\beta\int_0^t \xi(t-\ell,x+B_{\ell})d\ell-\frac12\beta^2R(0)t\right\} \Big].
$$   
We write the exponent in the above display explicitly:
\[
\begin{aligned}
\int_0^t \xi(t-\ell,x+B_{\ell}) d\ell
=\int_0^t\int_{\R^d} \phi(x+B_{t-\ell}-y)  \eta(\ell,y)dy  d\ell.
\end{aligned}
\] 
Fix a realization of the Brownian motion $B$,  we have 
\[
\D_{s,y} \left(\int_0^t \xi(t-\ell,x+B_{\ell}) d\ell\right) = \phi(x+B_{t-s}-y), \quad\quad s\in[0,t],y\in\R^d.
\]
 From here by a standard argument we get the formula for  the Malliavin
 derivative of $h(t,x)$, with respect to $\eta$: 
 \begin{equation}\label{e.Dhtx}
 \begin{aligned}
 \D_{s,y} h(t,x)=&\D_{s,y}\log u(t,x)=u(t,x)^{-1} \D_{s,y}u(t,x)\\
 =&\frac{\beta\E [e^{\beta\int_0^t \xi(t-\ell,x+B_{\ell})d\ell} \phi(x+B_{t-s}-y)]}{\E [e^{\beta\int_0^t \xi(t-\ell,x+B_{\ell})d\ell}]}.
 \end{aligned}
 \end{equation}
 From the above expression, it is clear that  
 \begin{equation}\label{e.bdmade}
 0\leq \D_{s,y}h(t,x)\leq\beta\|\phi\|_\infty,
\end{equation} and for all $t>0,x\in\R^d$ and $s\in[0,t]$, we have
\begin{equation}
  \label{011401-22}
  \int_{\R^d} \D_{s,y}h(t,x)dy=\beta\|\phi\|_{1}.
\end{equation}

 To apply Proposition~\ref{p.talagrand}, we first estimate 
 \[
 \D_{s,y}h_M(t)= |B_M|^{-1}\int_{B_M} \D_{s,y}h(t,x)dx.
 \] 
 By   the stationarity of $h(t,\cdot)$ in the spatial variable and \eqref{011401-22}, we have 
 \begin{equation}\label{e.bdDh1}
 \begin{aligned}
 \|\D_{s,y}h_M(t)\|_1= &\frac{1}{|B_M|} \int_{B_M} \|\D_{s,y}h(t,x)\|_1 dx\\
 =&\frac{1}{|B_M|}\int_{B_M} \|\D_{s,y-x}h(t,0)\|_1 dx \leq \frac{1}{|B_M|} \beta\|\phi\|_{L^1(\R^d)}.
 \end{aligned}
 \end{equation}

 For the $L^2(\Omega)$ norm,   we have 
 \[
 \EE |\D_{s,y}h_M(t)|^2\leq  \frac{1}{|B_M|}\int_{B_M} \EE |\D_{s,y}h(t,x)|^2 dx.
 \]
 By \eqref{e.bdmade}, we further derive 
 \begin{equation}
   \label{021401-22}
 \EE |\D_{s,y}h_M(t)|^2 \leq \frac{\beta\|\phi\|_\infty}{|B_M|}\int_{B_M}\EE \D_{s,y}h(t,x) dx.
 \end{equation}
 Let
 $$
 A_{s,y}:=\left\{\frac{\beta\|\phi\|_\infty}{|B_M|}\int_{B_M}\EE
   \D_{s,y}h(t,x) dx\right\}^{1/2}
 $$
By \eqref{021401-22} we have $\|\D_{s,y}h_M(t)\|_2\leq A_{s,y}$.


Applying Proposition~\ref{p.talagrand}, we have 
 \[
 \Var\, h_M(t) \leq C \int_0^t\int_{\R^d}  \frac{A_{s,y}^2}{1+\log
   \frac{A_{s,y}}{\|\D_{s,y}h_M(t)\|_1}} dy ds.
  \]
  By \eqref{e.bdDh1}, we have 
  \[
  \begin{aligned}
  \frac{A_{s,y}}{\|\D_{s,y}h_M(t)\|_1}&= \frac{A_{s,y}}{\frac{1}{|B_M|}\int_{B_M} \|\D_{s,y}h(t,x)\|_1 dx}\\
  &=\left\{\frac{\beta\|\phi\|_\infty |B_M|}{\int_{B_M} \|\D_{s,y}h(t,x)\|_1 dx}\right\}^{1/2} \geq \sqrt{\|\phi\|_\infty \|\phi\|^{-1}_{1}|B_M|}.
  \end{aligned}
  \]
  This, in turn implies 
  \[
  \Var\, h_M(t) \leq \frac{C }{1+\frac12\log \Big(\|\phi\|_\infty\|\phi\|^{-1}_{1}|B_M|\Big)}\int_0^t\int_{\R^d} A_{s,y}^2 dy ds.
  \]
  On the other hand, from the definition of $A_{s,y}$, we have 
  \[
  \begin{aligned}
  \int_0^t\int_{\R^d} A_{s,y}^2 dsdy=&\beta\|\phi\|_\infty |B_M|^{-1}\int_0^t\int_{\R^d}\left(\int_{B_M} \|\D_{s,y}h(t,x)\|_1 dx\right)dy ds\\
  =&\beta\|\phi\|_\infty |B_M|^{-1}\int_0^t\int_{\R^d}\left(\int_{B_M} \|\D_{s,y-x}h(t,0)\|_1 dx\right)dy ds \\
  =&\beta^2\|\phi\|_\infty \|\phi\|_{1}t,
  \end{aligned}
  \]
  where in the last ``='' we have used \eqref{011401-22}. The proof is complete.
 \end{proof}

 \section{Further discussion}
 \label{s.discussion}

 The approach here should also apply to other polymer models, including the ones in the discrete setting and the one with a $1+1$ spacetime white noise. A challenging problem is to study the other overlap $\mathscr{O}_T$ defined in \eqref{e.defO1}, and perhaps a  more modest question is actually to provide a different proof of Theorem~\ref{t.mainth}, without using the superconcentration of $\log Z_T$. In particular, one would like to understand that, in the following expression, 
 \[
 \begin{aligned}
 \cO_T
 =\int_0^T \cR(\rho(t,\cdot))dt, \quad  \mbox{ with } \mathcal{R}(f):=\int_{\R^{2d}} f(y)f(y')R(y-y')dydy',
 \end{aligned}
 \]
 where the mixing comes from and how it leads to the Gaussian fluctuations of $\cO_T-\EE \cO_T$. Recall that $\rho$ was defined in \eqref{e.defrho} and is the endpoint distribution of the directed polymer of length $t$. In a recent preprint \cite{das}, for the continuum directed polymer in the $1+1$ spacetime white noise, the following result was derived: for each $t>0$, the random density $\rho(t,\cdot)$ has a unique mode, denoted by $x_t$, and after a   shift by $x_t$, the following weak convergence on $C(\R)$ holds:  
 \begin{equation}\label{e.das}
 \{\rho(t,x_t+x)\}_{x\in\R} \Rightarrow  \left\{\frac{e^{-\B(x)}}{\int_{\R}e^{-\B(x')}dx'}\right\}_{x\in\R}, \quad\quad \mbox{ as } t\to\infty.
 \end{equation}
 Here $\B$ is a two-sided $3$d-Bessel process with diffusion coefficient $1$, see \cite[Theorem 1.5]{das} for more details. It is clear that $\mathcal{R}(\rho(t,\cdot))=\mathcal{R}(\rho(t,x_t+\cdot))$, so, in light of \eqref{e.das}, one may expect that $ \cR(\rho(t,\cdot))$ converges in distribution as $t\to\infty$, and for large $t$, $\cR(\rho(t,\cdot))$ mostly depends on the recent history of the random environment. This type of evidence of mixing is consistent with our result. 
 
On the other hand, 
we expect that 
 for any two initial distributions $\mu_j$, $j=1,2$ of densities $\rho_j(0,\cdot)$,
 the   respective processes $\rho_j(t,\cdot)$ satisfy
 \[
 \EE\int_{\R^d} |\rho_1(t,x)-\rho_2(t,x)|dx\to0,  \mbox{ as  }t\to\infty.
 \]
This is closely related to \cite[Theorem 4.4]{bakhtin2} which deals with a stationary version of the polymer measure in $1+1$ dimension. To study the mixing property of $\{\rho(t,\cdot)\}_{t\geq0}$, or more precisely, the randomly shifted one such as $\{\rho(t,x_t+\cdot)\}_{t\geq0}$ in \eqref{e.das}, or the overlap process $\{\cR(\rho(t,\cdot))\}_{t\geq0}$ which factors out the spatial shift, is an important question, the answer to which we believe is closely related to the localization behaviors of the polymer paths.


\begin{thebibliography}{99}


\bibitem{AZ}
 K. Alexander and N. Zygouras, ``Subgaussian concentration and
rates of convergence in directed polymers'' Electronic Journal of Probability,
18:1-28, 2013.

\bibitem{acq}
G. Amir, I. Corwin, and J. Quastel, ``Probability distribution of the free energy of the
continuum directed random polymer in 1+1 dimensions'', Comm. Pure Appl. Math., 64 (2011),
466-537.

\bibitem{bakhtin}
Y. Bakhtin and  K. Khanin, ``On global solutions of the random Hamilton-Jacobi equations and the KPZ problem'', Nonlinearity 31 (2018), pp. R93-R121.

\bibitem{bakhtin2}
Y. Bakhtin and L. Li, ``Thermodynamic limit for directed polymers and stationary solutions of the Burgers equation'', Communications on Pure and Applied Mathematics 72.3 (2019): 536-619.


\bibitem{bakhtin1}
Y.~Bakhtin and D.~Seo, ``Localization of directed polymers in continuous space'', Electronic Journal of Probability,  2020;25.

\bibitem{bqs}
M. Bal\'azs, J. Quastel, and T. Sepp\"al\"ainen, ``Fluctuation exponent of the KPZ/stochastic
Burgers equation'', J. Amer. Math. Soc., 24 (2011), 683-708.



\bibitem{Bates}
E. Bates, ``Full-path localization of directed polymers'', Electronic Journal of Probability 26 (2021): 1-24.

\bibitem{BC1} E. Bates and S. Chatterjee, ``The endpoint distribution of directed polymers''. The Annals of Probability 48.2 (2020): 817-871.

\bibitem{BC2} E. Bates and S. Chatterjee, ``Localization in Gaussian disordered systems at low temperature'', The Annals of Probability 48.6 (2020): 2755-2806.

\bibitem{BKS}
I. Benjamini,  G.Kalai and O. Schramm,  ``First passage percolation has sublinear distance variance'',
Ann. Probab. 31(4), 1970-1978 (2003).

\bibitem{bcf}
A. Borodin, I. Corwin, and P. Ferrari, ``Free energy fluctuations for directed polymers in
random media in 1+1 dimension'', Comm. Pure Appl. Math., 67 (2014), 1129-1214.


\bibitem{mukherjee}
Y.~Br\"oker and C.~Mukherjee, ``Localization of the Gaussian multiplicative chaos in the Wiener space and the stochastic heat equation in strong disorder'', Annals of Applied Probability, 29(6) (2019), pp.~3745--3785.


\bibitem{C} S. Chatterjee, ``Superconcentration and Related Topics'', Springer, 2014.

\bibitem{C1} S. Chatterjee,  ``Superconcentration in surface growth''. arXiv preprint arXiv:2103.09199 (2021).

\bibitem{C2}
S. Chatterjee, ``Proof of the path localization conjecture for directed polymers''. Communications in Mathematical Physics 370.2 (2019): 703-717.


\bibitem{comets} F. Comets, ``Directed polymers in random environments''. Springer., 2017.

\bibitem{comets1}
F. Comets and M. Cranston, ``Overlaps and pathwise localization in the Anderson polymer model'', Stochastic Processes and their Applications 123.6 (2013): 2446-2471.

\bibitem{comets4}
F. Comets and Vu-Lan Nguyen, ``Localization in log-gamma polymers with boundaries'', Probability Theory and Related Fields 166.1 (2016): 429-461.

\bibitem{comets2}
F. Comets,  T. Shiga, and N. Yoshida, ``Directed polymers in a random environment: path localization and strong disorder'', Bernoulli 9.4 (2003): 705-723.


\bibitem{comets3}
F. Comets and N. Yoshida, ``Localization transition for polymers in Poissonian medium'', Communications in Mathematical Physics 323.1 (2013): 417-447.

\bibitem{corwin2012kardar}
I.~Corwin, ``The Kardar--Parisi--Zhang equation and universality
  class'', Random matrices: Theory and applications, 1 (2012), p.~1130001.
  
  
\bibitem{das}  
S. Das and W. Zhu. ``Localization of the continuum directed random polymer'', arXiv preprint arXiv:2203.03607 (2022).  


\bibitem{DO} M. Duerinckx and  F. Otto, ``Higher-order pathwise theory of fluctuations in stochastic homogenization''. Stochastics and Partial Differential Equations: Analysis and Computations volume 8, pages 625-692 (2020).


\bibitem{dcl}
A. Dunlap, C. Graham and L. Ryzhik, ``Stationary solutions to the stochastic Burgers equation on the line''. Communications in Mathematical Physics 382.2 (2021): 875-949.


\bibitem{dgk}
A. Dunlap, Y. Gu and T. Komorowski, ``Fluctuations of the KPZ equation on a large torus''. arXiv preprint arXiv:2111.03650 (2021).


\bibitem{graham}
B. Graham, ``Sublinear variance for directed last-passage percolation'',
Journal of Theoretical Probability, 25(3):687-702, 2012.

\bibitem{gk} Y. Gu and T. Komorowski, ``KPZ on torus: Gaussian
  fluctuations''. arXiv preprint arXiv:2104.13540 (2021).

\bibitem{jacod-shiryaev}  J. Jacod and A. N. Shiryaev, ``Limit theorems
  for stochastic processes.'' Second edition. Grundlehren der
  mathematischen Wissenschaften, 288, Springer-Verlag, Berlin, 2003.

\bibitem{lacoin1}
H. Lacoin, ``New bounds for the free energy of directed polymers in dimension 1+1 and 1+2'', Communications in Mathematical Physics 294.2 (2010): 471-503.

\bibitem{lacoin}
H. Lacoin, ``Influence of spatial correlation for directed polymers'', The Annals of Probability 39.1 (2011): 139-175.

\bibitem{ofer}
C. Mukherjee, A. Shamov and O. Zeitouni, ``Weak and strong disorder for the stochastic heat equation and continuous directed polymers in $ d\geq 3$''. Electronic Communications in Probability, 21, (2016) 1-12.

\bibitem{nualart}
D. Nualart, ``The Malliavin calculus and related topics'', Vol. 1995. Berlin: Springer, 2006.



\bibitem{qs}
J. Quastel and H. Spohn, ``The one-dimensional KPZ equation and its universality class'', J.
Stat. Phys., 160 (2015), 965-984.

\bibitem{timo}
T. Sepp\"al\"ainen, ``Scaling for a one-dimensional directed polymer with boundary conditions'', Ann. Probab. 40, 19?73 (2012).

\bibitem{talagrand}
M. Talagrand, ``On Russo's approximate zero-one law'', Ann. Probab. 22, 1576-1587 (1994)

\bibitem{TR} S. Tindel and C. Rovira,  ``On the Brownian directed polymer in a Gaussian random environment''.
J. Funct. Anal. 222 (2005), 178-201.


\bibitem{vargas}
V. Vargas, ``Strong localization and macroscopic atoms for directed polymers'', Probability theory and related fields 138.3-4 (2007): 391-410.

\end{thebibliography}

\end{document}